\newtheorem{theorem}{Theorem}[section]
\newtheorem{corollary}[theorem]{Corollary}
\newtheorem{lemma}[theorem]{Lemma}
\newtheorem{proposition}[theorem]{Proposition}
\theoremstyle{definition}
\newtheorem{definition}[theorem]{Definition}
\newtheorem{example}[theorem]{Example}
\newtheorem{remark}[theorem]{Remark}
\DeclareMathOperator{\co}{\xrightarrow[]{o}}
\DeclareMathOperator{\stmu}{\downarrow^{st_\mu}}
\DeclareMathOperator{\stmuc}{\xrightarrow[]{st_\mu}}
\begin{document}

\title{Statistical convergence of nets in Riesz spaces}\maketitle\author{\centering{{Abdullah Ayd\i n$^{1,*}$, Fatih Temizsu$^2$\vspace{2mm} \\
\small $^1$Department of Mathematics, Mu\c{s} Alparslan University, Mu\c{s}, Turkey \\  \small a.aydin@alparslan.edu.tr\\
\small $^2$Department of Mathematics, Bingöl University, Bingöl, Turkey \\  \small ftemizsu@bingol.edu.tr \\$^*$Corresponding Author}
		
\abstract{The statistical convergence is defined for sequences with the asymptotic density on the natural numbers, in general. In this paper, we introduce the statistical convergence for nets in Riesz spaces by using the finite additive measures on directed sets. Moreover, we give some relations among the statistical convergence and the lattice properties such as the order convergence and lattice operators.}\\
\vspace{2mm}

{\bf Keywords:} statistical convergence of nets, $\mu$-statistical order convergence, statistical convergence, order convergence, Riesz space, finitelly additive measure, directed set measure.
\vspace{2mm}

{\bf 2010 AMS Mathematics Subject Classification:} {\normalsize 46A40, 40A05, 46B42, 40D25}
%%%%%%%%%%%%%%%%%%%%%%%%%%%%%%%%%%%%%%%%%
\section{Introduction}
%%%%%%%%%%%%%%%%%%%%%%%%%%%%%%%%%%%%%%%%%
The statistical convergence of sequences is handled together with the asymptotic (or, natural) density of sets on the natural numbers $\mathbb{N}$. On the other hand, Connor introduced the notion of statistical convergence of sequences with finitely additive set function \cite{Con,Con2}. After then, some similar works have been done (cf. \cite{CLL,DO,Mor}). Also, several applications and generalizations of the statistical convergence of sequences have been investigated by several authors (cf. \cite{Aydn1,Aydn2,AC,Fast,M,Mor,St,SP}). However, as far as we know, the concept of statistical convergence related to nets has not been done except for the paper \cite{MDM}, in which the asymptotic density of a directed set $(D,\leq)$ was introduced by putting a special and strong rule on the directed sets such as the set $\{\alpha\in D:\alpha\leq\beta\}$ is finite and the set $\{\alpha\in D:\alpha\geq\beta\}$ is infinite for each element $\beta$ in a directed set $(D,\leq)$. By the way, we aim to introduce a general concept of statistical convergent nets thanks to a new notion called directed set measure.

Recall that a binary relation \textquotedblleft$\leq$\textquotedblright \ on a set $A$ is called a {\em preorder} if it is reflexive and transitive. A non-empty set $A$ with a preorder binary relation "$\leq$" is said to be a \textit{directed upwards} (or, for short, \textit{directed set}) if for each pair $x,y\in A$ there must exist $z\in A$ such that $x\leq z$ and $y\leq z$. Unless otherwise stated, we consider all directed sets as infinite. For given elements $a$ and $b$ in a preorder set $A$ such that $a\leq b$, the set $\{x\in A:a\leq x\leq b\}$ is called an \textit{order interval} in $A$. A subset $I$ of $A$ is called an \textit{order bounded set} whenever $I$ is contained in an order interval. 

A function whose domain is a directed set is said to be a \textit{net}. A net is briefly abbreviated as $(x_\alpha)_{\alpha\in A}$ with its directed domain set $A$. $(A,\leq_A)$ and $(B,\leq_B)$ be directed sets. Then a net $(y_\beta)_{\beta\in B}$ is said to be a \textit{subnet} of a net $(x_\alpha)_{\alpha\in A}$ in a non empty set $X$ if there exists a function $t:B\to A$ such that $y_\beta=x_{t(\beta)}$ for all $\beta\in B$, and also, for each $\alpha\in A$ there exists $\beta_\alpha\in B$ such that $\alpha\leq t(\beta)$ for all $\beta\geq \beta_\alpha$ (cf. \cite[Def.3.3.14]{Rud}). It can be seen that $\{t(\beta)\in A:\beta_\alpha\leq \beta\}\subseteq\{\hat{\alpha}\in A:\alpha\leq\hat{\alpha}\}$ holds for subnets. 

A real vector space $E$ with an order relation \textquotedblleft$\leq$\textquotedblright \ is called an {\em ordered vector space} if, for each $x,y\in E$ with $x\leq y$, $x+z\leq y+z$ and $\alpha x\leq\alpha y$ hold for all $z\in E$ and $\alpha \in \mathbb{R}_+$. An ordered vector space $E$ is called a {\em Riesz space} or {\em a vector lattice} if, for any two vectors $x,y\in E$, the infimum and the supremum
$$
x\wedge y=\inf\{x,y\} \ \ \text{and} \ \ x\vee y=\sup\{x,y\}
$$
exist in $E$, respectively.	A Riesz space is called a \textit{Dedekind complete} if every nonempty bounded from the above  set has a supremum (or, equivalently, whenever every nonempty bounded below subset has an infimum). A subset $I$ of a Riesz space $E$ is said to be a \textit{solid} if, for each $x\in E$ and $y\in I$ with $|x|\leq|y|$, it follows that $x\in I$. A solid vector subspace is called an \textit{order ideal}. A Riesz space $E$ has the \textit{Archimedean} property provided that $\frac{1}{n}x\downarrow0$ holds in $E$ for each $x\in E_+$. In this paper, unless otherwise stated, all Riesz spaces are assumed to be real and Archimedean. We remind the crucial notion of Riesz spaces (cf. \cite{AB,ABPO,LZ,Za}).

\begin{definition}\label{order conv}
	A net $(x_\alpha)_{\alpha\in A}$ in a Riesz space $E$ is called {\em order convergent} to $x\in E$ if there exists another net $(y_\alpha)_{\alpha\in A}\downarrow 0$ (i.e., $\inf y_\alpha=0$ and $y_\alpha\downarrow$) such that $|x_\alpha-x|\le y_\alpha$ holds for all $\alpha\in A$.
\end{definition}

We refer to the reader for some different types of the order convergence and some relations among them to \cite{AS}.

%%%%%%%%%%%%%%%%%%%%%%%%%%%%%%%%%%%%%%%%%%%%%%%%%%%%%%%%%%%%%%%%%%%

%%%%%%%%%%%%%%%%%%%%%%%%%%%%%%%%%%%%%%%%%%%%%%%%%%%%%%%%%%%%%%%%%%
\section{The $\mu$-statistically order convergence}\label{sec2}
We remind that a map from a field $\mathcal{M}$ to $[0,\infty]$ is called {\em finitely additive measure} whenever $\mu(\emptyset)=0$ and $\mu(\cup_{i=1}^{n}E_i)=\sum_{i=1}^{n}\mu(E_i)$ for all finite disjoint sets $\{E_i\}_{i=1}^n$ in $\mathcal{M}$ (cf. \cite[p.25]{Folland}). Now, we introduce the notion of measure on directed sets.
\begin{definition}\label{directed set measure}
	Let $A$ be a directed set and $\mathcal{M}$ be a subfield of $\mathcal{P}(A)$. Then
	\begin{enumerate}
		\item[(1)] \ an order interval $[a,b]$ of $A$ is said to be a {\em finite order interval} if it is a finite subset of $A$;
		\item[(2)] \ $\mathcal{M}$ is called an {\em interval field} on $A$ whenever it includes all finite order intervals of $A$;
		\item[(3)] \ a finitely additive measure $\mu:\mathcal{M}\to[0,1]$ is said to be a {\em directed set measure} if $\mathcal{M}$ is an interval field and $\mu$ satisfies the following facts: $\mu(I)=0$ for every finite order interval $I\in \mathcal{M}$; $\mu(A)=1$; $\mu(C)=0$ whenever $C\subseteq B$ and $\mu(B)=0$ holds for $B,C\in \mathcal{M}$.
	\end{enumerate} 
\end{definition}

\begin{example}\label{d set measure exm}
	Consider the directed set $A:=\mathbb{N}$ and define a measure $\mu$ from $2^\mathbb{N}$ to $[0,1]$ denoted by $\mu(A)$ as the Banach limit of $\frac{1}{k}|A\cap\{1,2,\dots,k\}|$ for all $A\in 2^\mathbb{N}$. Then one can see that $\mu(I)=0$ for all finite order interval sets because of $\frac{1}{k}|I\cap\{1,2,\dots,k\}|\leq \frac{1}{k} I\to 0$. Also, it follows from the properties of Banach limit that $\mu(\mathbb{N})=1$ and $\mu(A\cup B)=\mu(A)+\mu(B)$ for disjoint sets $A$ and $B$. Thus, $\mu$ is finitely additive, and so, it is a directed set measure.
\end{example}

Throughout this paper, the vertical bar of a set will stand for the cardinality of the given set and $\mathcal{P}(A)$ is the power set of $A$. Let give an example of directed set measure for an arbitrary uncountable set.
\begin{example}\label{net give measure}
	Let $A$ be an uncountable directed set. Consider a field $\mathcal{M}$ consists of countable or co-countable (i.e., the complement of the set is countable) subsets of $A$. Then $\mathcal{M}$ is an interval field. Thus, a map $\mu$ from $\mathcal{M}$ to $[0,1]$ defined by $\mu(C):=0$ if $C$ is a countable set, otherwise $\mu(C)=1$. Hence, $\mu$ is a directed set measure. 
\end{example}

In this paper, unless otherwise stated, we consider all nets with a directed set measure on interval fields of the power set of  the index sets. Moreover, in order to simplify presentation, a directed set measure on an interval field $\mathcal{M}$ of the directed set $A$ will be expressed briefly as a measure on the directed set $A$. Motivated from \cite[p.302]{Fridy}, we give the following notion.

\begin{definition}\label{decreasin to}
	If a net $(x_\alpha)_{\alpha\in A}$ satisfies a property $P$ for all $\alpha$ except a subset of $A$ with measure zero then we say that $(x_\alpha)_{\alpha\in A}$ satisfies the property $P$ for almost all $\alpha$, and we abbreviate this by a.a.$\alpha$.
\end{definition}

Recall that the asymptotic density of a subset $K$ of natural numbers $\mathbb{N}$ is defined by
\[
\delta(K):=\lim_{n\rightarrow \infty}\frac{1}{n}\left \vert \left \{  k\leq n:k\in
A\right \}  \right \vert.
\]
We refer the reader for an exposition on the asymptotic density of sets in $\mathbb{N}$ to \cite{FS,SAL}. It can be observed that finite subsets and finite order intervals in natural numbers coincide. Thus, we give the following observation.
\begin{remark}\label{natural density properties}
	It is clear that the asymptotic density of subsets on $\mathbb{N}$ satisfies the conditions of a directed set measure when $\mathcal{P}(\mathbb{N})$ is considered as an interval field on the directed set $\mathbb{N}$. Thus, it can be seen that the directed set measure is an extension of the asymptotic density.
\end{remark}

Remind that a sequence $(x_n)$ in a Riesz space $E$ is called {\em statistically monotone decreasing} to $x\in E$ if there exists a subset $K$ of $\mathbb{N}$ such that $\delta(K)=1$ and $(x_{n_k})$ is decreasing to $x$, i.e., $(x_{n_k})_k\downarrow$ and $\inf(x_{n_k})=x$ (cf. \cite{Aydn2,SP}). Now, by using the notions of measure on directed sets and the statistical monotone decreasing which was introduced in \cite{SAL} for real sequences, we introduce the concept of the statistical convergence of nets in Riesz spaces.

\begin{definition}\label{conv def}
	Let $E$ be a Riesz space and $(p_\alpha)_{\alpha\in A}$ be a net in $E$ with a measure $\mu$ on the index set $A$. Then $(p_\alpha)_{\alpha\in A}$ is said to be {\em $\mu$-statistically decreasing} to $x\in E$ whenever there exists $\Delta\in \mathcal{M}$ such that $\mu(\Delta)=1$ and $(p_{\alpha_\delta})_{\delta\in \Delta}\downarrow x$. Then it is abbreviated as $(p_\alpha)_{\alpha\in A}\stmu x$.
\end{definition}

We denote the class of all $\mu$-statistically decreasing nets in a Riesz space $E$ by $E_{st_\mu\downarrow}$, and also, the set $E_{st_\mu\downarrow}\{0\}$ denotes the class of all $\mu$-statistically decreasing null nets on $E$. On the other hand, one can observe that $\mu(\Delta^c)=\mu(A-\Delta)=0$ whenever $\mu(\Delta)=1$ because of $\mu(A)=\mu(\Delta\cup \Delta^c)=\mu(\Delta)+\mu(\Delta^c)$. We consider Example \ref{net give measure} for the following example.
\begin{example}\label{exmaple order imply st conv}
	Let $E$ be a Riesz space and $(p_\alpha)_{\alpha\in A}$ be a net in $E$. Take $\mathcal{M}$ and $\mu$ from Example \ref{net give measure}. Thus, if $(p_\alpha)_{\alpha\in A}\downarrow x$ then $(p_\alpha)_{\alpha\in A}\stmu x$ for some $x\in E$.
\end{example}

For the general case of Example \ref{exmaple order imply st conv}, we give the following work whose proof follows directly from the basic definitions and results.
\begin{proposition}\label{order conv is st down to zero}
	If $(p_\alpha)_{\alpha\in A}$ is an order decreasing null net in a Riesz space then $(p_\alpha)_{\alpha\in A}\stmu 0$.
\end{proposition}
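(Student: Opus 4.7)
The plan is to reduce the statement to a direct application of Definition \ref{conv def} by choosing $\Delta := A$, i.e., the entire index set as the ``measure-one'' subset.

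First I would check that $A$ is an admissible choice: since $\mathcal{M}$ is a subfield of $\mathcal{P}(A)$, it must contain the ambient set $A$, and the condition $\mu(\Delta)=1$ is then immediate from the normalization axiom $\mu(A)=1$ built into Definition \ref{directed set measure}(3). Thus $\Delta=A$ already satisfies one of the two requirements imposed by the notion of $\mu$-statistically decreasing.

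Next I would verify the remaining requirement, namely that $(p_{\alpha_\delta})_{\delta\in\Delta}\downarrow 0$. With $\Delta=A$, the identity map $A\to A$ witnesses that the candidate ``subnet'' $(p_{\alpha_\delta})_{\delta\in\Delta}$ may be taken to be the original net $(p_\alpha)_{\alpha\in A}$ itself. By hypothesis this net is order decreasing with $\inf_\alpha p_\alpha = 0$, i.e., $(p_\alpha)_{\alpha\in A}\downarrow 0$, which is exactly what Definition \ref{conv def} demands for $\mu$-statistical decreasing convergence to $0$.

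I do not anticipate any genuine obstacle here: the content of the proposition is essentially that ordinary order decreasing convergence is the degenerate full-measure case of $\mu$-statistical decreasing convergence. The only point worth stating explicitly is that the interval field $\mathcal{M}$ is required to contain $A$ and to assign it measure $1$, so the trivial choice $\Delta=A$ is legitimate; once this is recorded, there is nothing further to check.
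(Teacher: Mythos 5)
Your proof is correct and is essentially the argument the paper intends when it says the proposition ``follows directly from the basic definitions'': taking $\Delta=A$, noting $A\in\mathcal{M}$ (as $\mathcal{M}$ is a field on $A$) and $\mu(A)=1$ by Definition \ref{directed set measure}(3), the hypothesis $(p_\alpha)_{\alpha\in A}\downarrow 0$ immediately satisfies Definition \ref{conv def}. Nothing further is needed.
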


Now, we introduce the crucial notion of this paper.
\begin{definition}\label{main def}
	A net $(x_\alpha)_{\alpha\in A}$ in a Riesz space $E$ is said to be {\em $\mu$-statistically order convergent} to $x\in E$ if there exists a net $(p_\alpha)_{\alpha\in A}\in E_{st_\mu\downarrow}\{0\}$ with  $\mu(\Delta)=1$ for some $\Delta\in \mathcal{M}$ such that $(p_{\alpha_\delta})_{\delta\in \Delta}\downarrow 0$ and $|x_{\alpha_\delta}-x|\leq p_{\alpha_\delta}$ for every $\delta\in \Delta$. Then it is abbreviated as $x_\alpha\stmuc x$.
\end{definition}

It can be seen that the means of $x_\alpha\stmuc x$ in a Riesz space is equal to say that there exists another sequence $(p_\alpha)_{\alpha\in A}\stmu 0$ such that $\mu\big(\{\alpha\in A:|x_{\alpha}-x|\nleq p_{\alpha}\}\big)=0$. It follows from Remark \ref{natural density properties} that the notion of statistical convergence of nets coincides with the notion of $\mu$-statistically order convergence in reel line. We denote the set $E_{st_\mu}$ as the family of all $st_\mu$-convergent nets in $E$, and $E_{st_\mu}\{0\}$ is the family of all $\mu$-statistically order null nets in $E$.

\begin{lemma}\label{dec and conv}
	Every $\mu$-statistically decreasing net $\mu$-statistically order converges to its $\mu$-statistically decreasing limit in Riesz spaces.
\end{lemma}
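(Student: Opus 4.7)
The plan is to unpack both definitions and show that the same witness set $\Delta$ that certifies the $\mu$-statistical decrease can be reused to certify $\mu$-statistical order convergence, with a canonical choice for the dominating null net.

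First, I would invoke the hypothesis: if $(x_\alpha)_{\alpha\in A}\stmu x$, then by Definition \ref{conv def} there exists $\Delta\in\mathcal{M}$ with $\mu(\Delta)=1$ such that $(x_{\alpha_\delta})_{\delta\in\Delta}\downarrow x$. In particular $x_{\alpha_\delta}\geq x$ for all $\delta\in\Delta$, because monotone decrease with infimum $x$ forces every term to lie above $x$.

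Next, I would construct the dominating net explicitly. Define
\[
p_\alpha:=\begin{cases} x_\alpha-x, & \alpha\in\Delta,\\ 0, & \alpha\in A\setminus\Delta.\end{cases}
\]
On $\Delta$ the subnet $(p_{\alpha_\delta})_{\delta\in\Delta}$ is the translate of $(x_{\alpha_\delta})_{\delta\in\Delta}$ by $-x$, so it inherits the decreasing property and $\inf_{\delta\in\Delta}p_{\alpha_\delta}=\inf_{\delta\in\Delta}x_{\alpha_\delta}-x=0$. Hence $(p_{\alpha_\delta})_{\delta\in\Delta}\downarrow 0$, and with the same set $\Delta$ of full measure we obtain $(p_\alpha)_{\alpha\in A}\in E_{st_\mu\downarrow}\{0\}$.

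Finally, I would verify the domination condition: for every $\delta\in\Delta$, since $x_{\alpha_\delta}\geq x$,
\[
|x_{\alpha_\delta}-x|=x_{\alpha_\delta}-x=p_{\alpha_\delta},
\]
so the inequality $|x_{\alpha_\delta}-x|\leq p_{\alpha_\delta}$ required by Definition \ref{main def} is met (with equality). Both clauses of Definition \ref{main def} are satisfied with the same witness $\Delta$, and so $x_\alpha\stmuc x$.

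There is no real obstacle here; the only minor subtlety is ensuring that one may use the same $\Delta$ both as the decrease-witness for $(p_\alpha)$ and as the index set on which the pointwise inequality holds, and that the values of $p_\alpha$ off $\Delta$ (which is of measure zero) are irrelevant to either clause. Both points are immediate from Definitions \ref{conv def} and \ref{main def}.
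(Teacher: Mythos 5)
Your proof is correct: taking the same full-measure witness $\Delta$ and the dominating net $p_\alpha = x_\alpha - x$ on $\Delta$ (using $x_{\alpha_\delta}\geq x$ so that $|x_{\alpha_\delta}-x| = p_{\alpha_\delta}$) is exactly the intended argument, which the paper itself omits as immediate from Definitions \ref{conv def} and \ref{main def}. Nothing is missing.
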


\begin{remark}\label{relatively uni}
	Recall that a net $(x_{\alpha})_{\alpha \in A}$ in a Riesz space $E$ {\em relatively uniform converges} to $x\in E$ if there exists $u\in E_+$ such that, for any $n\in\mathbb{N}$, there is an index $\alpha_n\in A$ so that $|x_\alpha-x|\le\frac{1}{n} u$ for all $\alpha\ge\alpha_n$ (cf. \cite[Thm.16.2]{LZ}). It is well known that the relatively uniform convergence implies the order convergence on Archimedean Riesz spaces (cf. \cite[Lem.2.2]{AEG}). Hence, it follows from Proposition \ref{order conv is st down to zero} and Lemma \ref{dec and conv} that every decreasing relatively uniform null net is $\mu$-statistically order convergent in Riesz spaces.
\end{remark}
%%%%%%%%%%%%%%%%%%%%%%%%%%%%%%%%%%%%%%%%%%%%%%%%%%%%%%%%%%

%%%%%%%%%%%%%%%%%%%%%%%%%%%%%%%%%%%%%%%%%%%%%%%%%%%%%%%%%%
\section{Main Results}\label{sec3}

Let $\mu$ be a measure on a directed set $A$. Following from \cite[Exer.9. p.27]{Folland}, it is clear that $\mu(\Delta\cap \Sigma)=1$ for any $\Delta,\Sigma\subseteq A$, $\mu(\Delta)=\mu(\Sigma)=1$. So, we begin the section with the following observation.
\begin{proposition}\label{inequality}
	Assume $x_\alpha\leq y_\alpha\leq z_\alpha$ satisfies in a Riesz spaces for each index $\alpha$. Then $y_\alpha\stmuc x$ whenever $x_\alpha\stmuc x$ and $z_\alpha\stmuc x$.
\end{proposition}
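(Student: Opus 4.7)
The plan is to produce, for $y_\alpha$, a dominating net witnessing $\mu$-statistical order convergence by superposing the witnesses already supplied by the hypotheses. First I would unpack Definition \ref{main def} applied to $x_\alpha\stmuc x$ and $z_\alpha\stmuc x$: this gives two nets $(p_\alpha)_{\alpha\in A},(q_\alpha)_{\alpha\in A}\in E_{st_\mu\downarrow}\{0\}$ together with sets $\Delta_1,\Delta_2\in\mathcal{M}$ of $\mu$-measure one, on which $(p_{\alpha_\delta})_{\delta\in\Delta_1}\downarrow 0$, $(q_{\alpha_\delta})_{\delta\in\Delta_2}\downarrow 0$, and the pointwise bounds $|x_{\alpha_\delta}-x|\leq p_{\alpha_\delta}$ on $\Delta_1$ and $|z_{\alpha_\delta}-x|\leq q_{\alpha_\delta}$ on $\Delta_2$ hold.

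The key algebraic step is the Riesz-space sandwich inequality. From $x_\alpha\leq y_\alpha\leq z_\alpha$ I get $x_\alpha-x\leq y_\alpha-x\leq z_\alpha-x$, so that $-|x_\alpha-x|\leq y_\alpha-x\leq|z_\alpha-x|$, which yields
\[
|y_\alpha-x|\leq|x_\alpha-x|+|z_\alpha-x|
\]
for every $\alpha\in A$. This is the bridge from the two hypothesised dominations to a single domination for $y_\alpha$.

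Next I would assemble the witness. Set $r_\alpha:=p_\alpha+q_\alpha$ and $\Delta:=\Delta_1\cap\Delta_2$. By the observation stated at the opening of Section \ref{sec3}, $\mu(\Delta)=1$ since $\mu(\Delta_1)=\mu(\Delta_2)=1$. On $\Delta$ both $(p_\alpha)$ and $(q_\alpha)$ are decreasing to $0$, hence $(r_\alpha)_{\alpha\in\Delta}\downarrow 0$ by the standard Riesz-space fact that the sum of two decreasing order-null nets on the same directed set is again decreasing to zero; thus $(r_\alpha)\in E_{st_\mu\downarrow}\{0\}$. Combining the sandwich inequality with the two dominations on $\Delta$ gives $|y_{\alpha_\delta}-x|\leq p_{\alpha_\delta}+q_{\alpha_\delta}=r_{\alpha_\delta}$ for all $\delta\in\Delta$, which by Definition \ref{main def} is exactly $y_\alpha\stmuc x$.

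The main obstacle I anticipate is bookkeeping rather than conceptual: making sure that the restrictions of $(p_\alpha)$ and $(q_\alpha)$ to the smaller set $\Delta$ are still decreasing with infimum zero (so that $(r_\alpha)$ belongs to $E_{st_\mu\downarrow}\{0\}$), and that $\Delta$ retains enough directed structure to carry the required subnets. Both points are absorbed into the framework built up in Section \ref{sec2}, but one should note them explicitly when writing the final proof.
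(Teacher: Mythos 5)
Your argument is correct and is exactly the one the paper intends: the paper states Proposition \ref{inequality} without proof as an observation following from $\mu(\Delta\cap\Sigma)=1$, and your combination of the sandwich estimate $|y_\alpha-x|\leq|x_\alpha-x|+|z_\alpha-x|$ with the intersected measure-one set and the summed witness $p_\alpha+q_\alpha$ is the same technique the paper itself uses in the proof of Theorem \ref{st are continuous}. The bookkeeping caveat you flag (restriction of a decreasing null net to the smaller measure-one set) is treated at the same level of informality by the paper, so no gap relative to its standard.
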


It can be seen from Proposition \ref{inequality} that if $0\leq x_\alpha\leq z_\alpha$ satisfies for each index $\alpha$ and $(z_\alpha)_{\alpha\in A}\in E_{st_\mu}\{0\}$ then $(x_\alpha)_{\alpha\in A}\in E_{st_\mu}\{0\}$. We give a relation between the order and the $\mu$-statistically order convergences in the next result.
\begin{theorem}\label{order implies st conv}
	Every order convergent net is $\mu$-statistically order convergent to its order limit.
\end{theorem}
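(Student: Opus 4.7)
The plan is to take the dominating net supplied by order convergence and verify directly that it witnesses $\mu$-statistical order convergence.

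Assume $(x_\alpha)_{\alpha \in A}$ order converges to $x$. By Definition \ref{order conv} there exists a net $(y_\alpha)_{\alpha \in A}$ with $y_\alpha \downarrow 0$ such that $|x_\alpha - x| \leq y_\alpha$ for every $\alpha \in A$. I would set $p_\alpha := y_\alpha$ and claim this $(p_\alpha)$ is the witness required by Definition \ref{main def}.

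First I would invoke Proposition \ref{order conv is st down to zero}: since $(y_\alpha)_{\alpha \in A}$ is an order decreasing null net, it is $\mu$-statistically decreasing to $0$, so $(y_\alpha) \in E_{st_\mu\downarrow}\{0\}$. By Definition \ref{conv def} this furnishes a set $\Delta \in \mathcal{M}$ with $\mu(\Delta)=1$ such that $(y_{\alpha_\delta})_{\delta \in \Delta} \downarrow 0$. Next I would observe that the pointwise bound $|x_\alpha - x| \leq y_\alpha$ holds for \emph{all} $\alpha \in A$, so in particular $|x_{\alpha_\delta} - x| \leq y_{\alpha_\delta} = p_{\alpha_\delta}$ for every $\delta \in \Delta$. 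Thus all four requirements of Definition \ref{main def}, namely that $(p_\alpha) \in E_{st_\mu\downarrow}\{0\}$, that $\mu(\Delta)=1$, that $(p_{\alpha_\delta})_{\delta \in \Delta} \downarrow 0$, and that the domination $|x_{\alpha_\delta} - x| \leq p_{\alpha_\delta}$ holds on $\Delta$, are verified simultaneously, giving $x_\alpha \stmuc x$.

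There is no real obstacle here: the argument is a direct chase through the definitions together with Proposition \ref{order conv is st down to zero}. The only point requiring a moment of care is noting that the $\Delta$ coming from the $\mu$-statistical decreasing of $(y_\alpha)$ is exactly the same $\Delta$ that Definition \ref{main def} demands, and that restricting the already-global inequality $|x_\alpha - x| \leq y_\alpha$ to the indices $\alpha_\delta$ is automatic.
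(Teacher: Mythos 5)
Your proposal is correct and follows essentially the same route as the paper's own proof: take the dominating net $(y_\alpha)\downarrow 0$ from order convergence, apply Proposition \ref{order conv is st down to zero} to get $(y_\alpha)\stmu 0$, and note the inequality $|x_\alpha-x|\leq y_\alpha$ restricts to the set $\Delta$ of full measure. Your version merely spells out the definition-chasing that the paper leaves implicit.
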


\begin{proof}
	Suppose that a net $(x_\alpha)_{\alpha\in A}$ is order convergent to $x$ in a Riesz space $E$. Then there exists another net $(y_\alpha)_{\alpha\in A}\downarrow 0$ such that $|x_\alpha-x|\le y_\alpha$ holds for all $\alpha\in A$. It follows from Proposition \ref{order conv is st down to zero} that $(y_\alpha)_{\alpha\in A}\stmu 0$. So, we obtain the desired result, $(x_\alpha)_{\alpha\in A}\stmuc x$.
\end{proof}

The converse of Theorem \ref{order implies st conv} need not to be true as the following example shows
\begin{example}\label{converse exam}
	Take the sequence $(e_n)$ of the standard unit vectors in the Riesz space $c_0$ the set of all null sequence in the real numbers. Then $(x_n)=(e_1,0,e_2,0,e_3,\cdots)$ does not converge in order because it is not order bounded in $c_0$. However, it can be seen that all eventually zero subsequences of $(e_n)$ is order convergent to zero, and so, they are $\mu$-statistically order convergent to zero. Then it follows Proposition \ref{subnet} that $x_n\stmuc 0$.
\end{example}

Following from Theorem \ref{order implies st conv} and \cite[Thm.23.2]{LZ}, we observe the following result.
\begin{corollary}\label{Dedekind riesz}
	Every order bounded monotone net in a Dedekind complete Riesz space is $\mu$-statistically order convergent.
\end{corollary}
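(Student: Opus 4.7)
The plan is to obtain this as an immediate consequence of two ingredients already available: the classical Dedekind completeness fact cited as \cite[Thm.23.2]{LZ}, and the implication established in Theorem \ref{order implies st conv}.

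First, I would let $(x_\alpha)_{\alpha\in A}$ be an order bounded monotone net in a Dedekind complete Riesz space $E$. Without loss of generality I would treat the increasing case $x_\alpha\uparrow$ with $x_\alpha\le u$ for all $\alpha$; the decreasing case is symmetric. By Dedekind completeness, the set $\{x_\alpha:\alpha\in A\}$ is bounded above and hence admits a supremum $x:=\sup_\alpha x_\alpha\in E$. A standard application of \cite[Thm.23.2]{LZ} then gives that $x_\alpha\uparrow x$ in the order sense, i.e.\ $(x-x_\alpha)_{\alpha\in A}\downarrow 0$, so $(x_\alpha)_{\alpha\in A}$ is order convergent to $x$ in the sense of Definition \ref{order conv} (take $y_\alpha:=x-x_\alpha$).

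Next, I would invoke Theorem \ref{order implies st conv}: every order convergent net is $\mu$-statistically order convergent to its order limit. Applying this to $(x_\alpha)_{\alpha\in A}$ yields $x_\alpha\stmuc x$, which is exactly the conclusion.

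There is no real obstacle here; the statement is essentially a restatement of the classical Dedekind completeness theorem under the weaker mode of convergence introduced in Definition \ref{main def}. The only minor point to mention for completeness is that the argument does not depend on the particular choice of the directed set measure $\mu$, since the implication order convergence $\Rightarrow$ $\mu$-statistical order convergence in Theorem \ref{order implies st conv} holds uniformly for any such $\mu$ on the index set $A$.
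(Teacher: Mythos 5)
Your proposal is correct and follows exactly the route the paper intends: use \cite[Thm.23.2]{LZ} to get order convergence of the order bounded monotone net in the Dedekind complete space, then apply Theorem \ref{order implies st conv} to pass to $\mu$-statistical order convergence. This matches the paper's (implicit) argument, which derives the corollary from precisely these two ingredients.
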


\begin{proposition}\label{subnet}
	The $st_\mu$-convergence of subnets implies the $st_\mu$-convergence of nets.
\end{proposition}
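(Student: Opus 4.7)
The plan is as follows. Let $(y_\beta)_{\beta\in B}=(x_{t(\beta)})_{\beta\in B}$ be the subnet of $(x_\alpha)_{\alpha\in A}$ witnessed by the index map $t:B\to A$ with the cofinality property recalled in the preliminaries, and assume $y_\beta\stmuc x$ relative to a directed set measure $\mu_B$ on $B$. My goal is to exhibit a directed set measure $\mu_A$ on $A$ under which $x_\alpha\stmuc x$. First I unfold Definition \ref{main def} applied to the subnet: there is $(p_\beta)_{\beta\in B}\in E_{st_{\mu_B}\downarrow}\{0\}$ and $\Delta\in\mathcal{M}_B$ with $\mu_B(\Delta)=1$ such that $(p_\beta)_{\beta\in\Delta}\downarrow 0$ and $|x_{t(\beta)}-x|\le p_\beta$ for every $\beta\in\Delta$.

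Next I transfer the measure along $t$. I set $\mathcal{M}_A:=\{S\subseteq A:t^{-1}(S)\in\mathcal{M}_B\}$ and define $\mu_A(S):=\mu_B(t^{-1}(S))$. Finite additivity, $\mu_A(\emptyset)=0$, $\mu_A(A)=1$, and monotonicity on subsets are immediate from the corresponding properties of $\mu_B$ and the standard set-theoretic identities for $t^{-1}$. The one nonobvious clause of Definition \ref{directed set measure} is that every finite order interval $[a,b]\subseteq A$ must satisfy $\mu_A([a,b])=0$. Here I use the cofinality of $t$: there exists $\beta_b\in B$ such that $t(\beta)\ge b$ for all $\beta\ge\beta_b$, so any $\beta\ge\beta_b$ that lies in $t^{-1}([a,b])$ must have $t(\beta)=b$. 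Combined with the finiteness of $[a,b]$ and the fact that $\mu_B$ kills finite order intervals of $B$, this forces $t^{-1}([a,b])$ to sit (up to a finite set) inside a $\mu_B$-null set, giving $\mu_A([a,b])=0$.

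Then I build the dominating net on $A$. Write $\Delta':=t(\Delta)$ and note $\mu_A(\Delta')=\mu_B(t^{-1}(\Delta'))\ge\mu_B(\Delta)=1$. For each $\alpha\in\Delta'$ pick $\beta(\alpha)\in\Delta$ with $t(\beta(\alpha))=\alpha$ and set $q_\alpha:=p_{\beta(\alpha)}$, and put $q_\alpha:=0$ for $\alpha\notin\Delta'$. Because $(p_\beta)_{\beta\in\Delta}\downarrow 0$ and the cofinality of $t$ translates upper cones in $\Delta$ to upper cones in $\Delta'$, the transferred net $(q_\alpha)_{\alpha\in\Delta'}$ is decreasing with infimum $0$, so $(q_\alpha)_{\alpha\in A}\in E_{st_{\mu_A}\downarrow}\{0\}$. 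For $\alpha\in\Delta'$ the domination $|x_\alpha-x|=|x_{t(\beta(\alpha))}-x|\le p_{\beta(\alpha)}=q_\alpha$ is inherited from the subnet, so Definition \ref{main def} delivers $x_\alpha\stmuc x$ as required.

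The main obstacle I expect is the measure-theoretic verification in the middle step, namely that the push-forward $\mu_A$ indeed annihilates every finite order interval of $A$. Cofinality of the subnet map gives control only on the upper cone of $\beta_b$, and squeezing $t^{-1}([a,b])$ into a $\mu_B$-null set requires combining that upper-cone control with the finiteness of $[a,b]$; getting this clean is where most of the work sits, and any attempted shortcut that ignores the cofinality clause is what will most likely fail.
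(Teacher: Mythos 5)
Your route is genuinely different from the paper's, and it has a real gap at exactly the step you flagged. The cofinality clause does not give you what you need to show that the push-forward $\mu_A$ annihilates finite order intervals of $A$: writing $t^{-1}([a,b])$ as the part outside the upper cone of $\beta_b$ plus the part inside it, the first part $\{\beta\in B:\beta\ngeq\beta_b\}$ need not be $\mu_B$-null (the measure only kills \emph{finite order intervals}, and in a general directed set the complement of an upper cone is neither finite nor an interval), and the second part is contained in $t^{-1}(\{b\})$, which need not be finite nor null --- finiteness of $[a,b]$ in $A$ puts no bound on the size of its preimage. In fact the push-forward can genuinely fail to be a directed set measure, and the fully general statement you are proving (arbitrary Kelley subnet $t:B\to A$ with its own measure $\mu_B$) is false: take $A=\mathbb{N}$, $B=\mathbb{N}\times\mathbb{N}$ with the product order, $t(m,n)=\min(m,n)$ (a legitimate subnet map), and let $\mu_B$ be a Banach-limit density concentrated on the line $\{1\}\times\mathbb{N}$; this is a directed set measure, the subnet equals the constant $x_1$ on a $\mu_B$-measure-one set, hence $(x_{t(\beta)})\stmuc x_1$, while the original net $(x_n)$ can be arbitrary. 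So no argument along your lines can close the gap without restricting the class of subnets or the admissible measures.

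The paper avoids this entirely by reading ``subnet'' in the narrow sense used throughout (see Theorem \ref{basic properties}(v) and the applications in Example \ref{converse exam} and Theorem \ref{cracter}): the subnet is the restriction $(x_{\alpha_\delta})_{\delta\in\Delta}$ to a set $\Delta\in\mathcal{M}$, measured by the \emph{same} $\mu$ on $A$. Then the witness $(p_{\alpha_\sigma})_{\sigma\in\Sigma}$ with $\Sigma\subseteq\Delta\subseteq A$ and $\mu(\Sigma)=1$ already serves as a witness for the whole net, since Definition \ref{main def} only requires domination on a measure-one subset of $A$; no transfer of measure is needed. Two secondary problems in your construction would also need repair even in favorable cases: $t(\Delta)$ need not lie in your field $\mathcal{M}_A$ (you need $t^{-1}(t(\Delta))\in\mathcal{M}_B$, which is not automatic), and the transferred net $q_\alpha:=p_{\beta(\alpha)}$ need not be decreasing in $\alpha$, because $t$ is not monotone and the selection $\beta(\alpha)$ is arbitrary, so upper cones in $\Delta'$ do not pull back to upper cones in $\Delta$.
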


\begin{proof}
	Let $(x_\alpha)_{\alpha\in A}$ be a net in a Riesz space $E$. Assume that a subnet $(x_{\alpha_\delta})_{\delta\in\Delta}$ of $(x_\alpha)_{\alpha\in A}$ $\mu$-statistically order converges to $x\in E$. Then there exists a net $(p_{\alpha_\delta})_{\delta\in\Delta}\in E_{st_\mu\downarrow}\{0\}$ with $\Sigma\subseteq\Delta$ and $\mu(\Sigma)=1$ such that $(p_{\alpha_{\delta_\sigma}})_{\sigma\in\Sigma}\downarrow 0$ and $|x_{\alpha_{\delta_\sigma}}-x|\leq p_{\alpha_{\delta_\sigma}}$ for each $\sigma\in\Sigma$. Since $\Sigma\subseteq A$ and $(x_{\alpha_{\delta_\sigma}})_{\sigma\in\Sigma}$ is also a subnet of $(x_\alpha)_{\alpha\in A}$, we can obtain the desired result.
\end{proof}

Since every order bounded net has an order convergent subnet in atomic $KB$-spaces, we give the following result by considering Theorem \ref{order implies st conv} and proposition \ref{subnet}.
\begin{corollary}\label{atomic}
	If $E$ is an atomic $KB$-space then every order bounded net is $\mu$-statistically order convergent in $E$.
\end{corollary}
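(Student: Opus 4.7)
The plan is to chain together the two cited results with the structural fact about atomic $KB$-spaces that the authors announce just before the corollary. Specifically, given any order bounded net $(x_\alpha)_{\alpha\in A}$ in an atomic $KB$-space $E$, the first step is to invoke the fact (which I would cite from the literature on atomic $KB$-spaces, e.g.\ via the Gordon/Veksler-type characterisations, or from Aliprantis--Burkinshaw) that $(x_\alpha)_{\alpha\in A}$ admits an order convergent subnet. Call such a subnet $(x_{\alpha_\delta})_{\delta\in\Delta}$ and denote its order limit by $x\in E$.

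Second, I would apply Theorem \ref{order implies st conv} to this subnet: since it is order convergent to $x$, it is also $\mu$-statistically order convergent to $x$, i.e.\ $(x_{\alpha_\delta})_{\delta\in\Delta}\stmuc x$.

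Third, I would invoke Proposition \ref{subnet}: because a subnet of $(x_\alpha)_{\alpha\in A}$ is $st_\mu$-convergent to $x$, the net $(x_\alpha)_{\alpha\in A}$ itself is $st_\mu$-convergent to $x$. This gives the desired conclusion $x_\alpha\stmuc x$.

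The only genuine obstacle is the first step, the extraction of an order convergent subnet of an order bounded net in an atomic $KB$-space; this is not proved in the excerpt and has to be quoted as a known result. Once that is accepted, the rest of the argument is a one-line composition of Theorem \ref{order implies st conv} with Proposition \ref{subnet}, so the proof is essentially just the sentence already provided by the authors and needs no further calculation.
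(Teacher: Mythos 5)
Your argument is exactly the paper's: the authors justify the corollary with the same three-step chain (order convergent subnet in an atomic $KB$-space, Theorem \ref{order implies st conv} applied to that subnet, then Proposition \ref{subnet} to pass back to the full net), citing the subnet-extraction fact without proof just as you do. So the proposal is correct and coincides with the paper's reasoning.
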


The lattice operations are $\mu$-statistically order continuous in the following sense.
\begin{theorem}\label{st are continuous}
	If $x_\alpha\stmuc x$ and $w_\alpha\stmuc w$ then $x_\alpha\vee w_\alpha\stmuc x\vee w$.
\end{theorem}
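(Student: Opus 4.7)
The plan is to prove this by combining the two given $\mu$-statistical order convergence data with the standard Riesz-lattice inequality
\[
|x_\alpha \vee w_\alpha - x\vee w| \le |x_\alpha - x| + |w_\alpha - w|.
\]
First I would unpack the hypotheses using Definition \ref{main def}: there exist nets $(p_\alpha)_{\alpha\in A}, (q_\alpha)_{\alpha\in A} \in E_{st_\mu\downarrow}\{0\}$ and sets $\Delta_1, \Delta_2 \in \mathcal{M}$ with $\mu(\Delta_1)=\mu(\Delta_2)=1$ such that $(p_{\alpha_\delta})_{\delta\in\Delta_1}\downarrow 0$ and $(q_{\alpha_\delta})_{\delta\in\Delta_2}\downarrow 0$, dominating $|x_\alpha - x|$ on $\Delta_1$ and $|w_\alpha - w|$ on $\Delta_2$ respectively.

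Next, I would set $\Sigma := \Delta_1 \cap \Delta_2$. By the remark immediately before Proposition \ref{inequality} (citing \cite[Exer.9.p.27]{Folland}), $\mu(\Sigma) = 1$. On $\Sigma$, both $p_{\alpha_\sigma}$ and $q_{\alpha_\sigma}$ remain decreasing with infimum $0$ (since $\Sigma$ inherits the restriction of each decreasing subnet), so the candidate witness $r_\alpha := p_\alpha + q_\alpha$ satisfies $(r_{\alpha_\sigma})_{\sigma\in\Sigma}\downarrow 0$. This shows $(r_\alpha)_{\alpha\in A} \in E_{st_\mu\downarrow}\{0\}$, and then the lattice inequality above yields
\[
|x_{\alpha_\sigma} \vee w_{\alpha_\sigma} - x\vee w| \le p_{\alpha_\sigma} + q_{\alpha_\sigma} = r_{\alpha_\sigma}
\]
for every $\sigma \in \Sigma$, which is exactly the dominance condition required by Definition \ref{main def} to conclude $x_\alpha \vee w_\alpha \stmuc x \vee w$.

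The main obstacle, I expect, is the bookkeeping for Step 2, namely verifying that restricting a $\mu$-statistically decreasing null net to a full-measure subset again yields a $\mu$-statistically decreasing null net, and that the termwise sum of two such nets (witnessed on possibly different full-measure sets) remains $\mu$-statistically decreasing. Both reduce to the observation that the intersection of full-measure sets still has measure $1$, together with the fact that the pointwise sum of two decreasing nets with infimum $0$ is again decreasing with infimum $0$ (using $\inf(p+q) \le \inf p + \inf q = 0$ and positivity of the tails). Once this routine verification is in hand, the Riesz-space inequality $|a\vee b - c\vee d| \le |a-c| + |b-d|$ does all the remaining work, and the theorem follows directly from the definition of $\mu$-statistical order convergence.
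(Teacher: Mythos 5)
Your proposal is correct and follows essentially the same route as the paper's proof: the lattice inequality $|x_\alpha\vee w_\alpha-x\vee w|\le|x_\alpha-x|+|w_\alpha-w|$, the intersection $\Delta_1\cap\Delta_2$ of the two full-measure sets, and the sum $p_\alpha+q_\alpha$ as the new dominating $\mu$-statistically decreasing null net. One small correction to your bookkeeping remark: the inequality $\inf(p+q)\le\inf p+\inf q$ is stated backwards (in general $\inf(p+q)\ge\inf p+\inf q$); the fact you actually need, namely that the pointwise sum of two nets decreasing to $0$ over the same directed index set again decreases to $0$, is the standard Riesz-space tail argument (for a lower bound $v$ and fixed $\sigma_0$, $v-q_{\sigma_0}\le p_\sigma$ for all $\sigma\ge\sigma_0$ gives $v\le q_{\sigma_0}$, hence $v\le 0$), which the paper also uses implicitly.
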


\begin{proof}
	Assume that $x_\alpha\stmuc x$ and $w_\alpha\stmuc w$ hold in a Riesz space $E$. So, there are nets $(p_\alpha)_{\alpha\in A}, (q_\alpha)_{\alpha\in A}\in E_{st_\mu\downarrow}\{0\}$ with $\Delta,\Sigma\in\mathcal{M}$ and $\mu(\Delta)=\mu(\Sigma)=1$ such that  
	\begin{eqnarray*}
		|x_{\alpha_\delta}-x|\leq p_{\alpha_\delta} \  \text{and} \ |w_{\alpha_\sigma}-w|\leq q_{\alpha_\sigma}
	\end{eqnarray*}
	satisfy for all $\delta\in \Delta$ and $\sigma\in\Sigma$. On the other hand, it follows from \cite[Thm.1.9(2)]{ABPO} that the inequality $\lvert x_\alpha\vee w_\alpha-x\vee w\rvert\leq \lvert x_\alpha-x\rvert+\lvert w_\alpha-w\rvert$ holds for all $\alpha\in A$. Therefore, we have 
	$$
	\lvert x_{\alpha_\delta}\vee w_{\alpha_\sigma}-x\vee w\rvert\leq p_{\alpha_\delta}+q_{\alpha_\sigma}
	$$
	for each $\delta\in \Delta$ and $\sigma\in\Sigma$. Take $\Gamma:=\Delta\cap \Sigma\in\mathcal{M}$. So, we have $\mu(\Gamma)=1$, and also, $\lvert x_{\alpha_\gamma}\vee w_{\delta_\gamma}-x\vee w\rvert\leq p_{\alpha_\gamma}+q_{\delta_\gamma}$ holds for all $\gamma\in \Gamma$. It follows from $(p_{\alpha_\gamma}+q_{\delta_\gamma})_{\gamma\in\Gamma}\downarrow 0$ that $x_\alpha\vee w_\alpha\stmuc x\vee w$.
\end{proof}

\begin{corollary}\label{basic corollary}
	If $x_\alpha\stmuc x$ and $w_\alpha\stmuc w$ in a Riesz space then
	\begin{enumerate}
		\item[(i)] \ $x_\alpha\wedge w_\alpha\stmuc x\wedge w$;
		\item[(ii)] \ $|x_\alpha|\stmuc |x|$;
		\item[(iii)] \ $x_\alpha^+\stmuc x^+$;
		\item[(iv)] \ $x_\alpha^-\stmuc x^-$.
	\end{enumerate}
\end{corollary}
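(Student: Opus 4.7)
The plan is to derive all four items as direct corollaries of Theorem \ref{st are continuous} by invoking standard Riesz space identities, after establishing one trivial preservation property. Specifically, if $x_\alpha \stmuc x$, then $-x_\alpha \stmuc -x$: this is immediate from Definition \ref{main def}, since $|(-x_{\alpha_\delta}) - (-x)| = |x_{\alpha_\delta} - x|$, so the very same dominating net $(p_\alpha)_{\alpha\in A}\in E_{st_\mu\downarrow}\{0\}$ and the same $\Delta$ witness $\mu$-statistical order convergence of $-x_\alpha$ to $-x$. I shall also use the trivial observation that the constant zero net satisfies $0 \stmuc 0$ (taking $p_\alpha \equiv 0$).

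For (i), the plan is to apply the identity $a \wedge b = -\bigl((-a) \vee (-b)\bigr)$. By the preservation remark, $-x_\alpha \stmuc -x$ and $-w_\alpha \stmuc -w$; Theorem \ref{st are continuous} then gives $(-x_\alpha) \vee (-w_\alpha) \stmuc (-x) \vee (-w)$, and a final negation yields $x_\alpha \wedge w_\alpha \stmuc x \wedge w$. For (ii), I would use $|a| = a \vee (-a)$: since $x_\alpha \stmuc x$ and $-x_\alpha \stmuc -x$, Theorem \ref{st are continuous} applied to this pair gives $|x_\alpha| = x_\alpha \vee (-x_\alpha) \stmuc x \vee (-x) = |x|$.

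For (iii) I would apply Theorem \ref{st are continuous} with the constant net $w_\alpha \equiv 0 \stmuc 0$, producing $x_\alpha^+ = x_\alpha \vee 0 \stmuc x \vee 0 = x^+$. Finally, (iv) follows by combining (iii) with negation-preservation: from $-x_\alpha \stmuc -x$, item (iii) applied to the net $(-x_\alpha)$ gives $(-x_\alpha)^+ = x_\alpha^- \stmuc (-x)^+ = x^-$.

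No serious obstacle is expected: the only issue that could arise is the alignment of the measure-one index sets witnessing $\mu$-statistical order convergence of the inputs, but this was already handled in the proof of Theorem \ref{st are continuous} by intersecting two sets of $\mu$-measure one. Every item above is a one-line reduction either to that theorem or to the negation-preservation observation, so the proof will consist of four short syllogisms rather than any new estimation.
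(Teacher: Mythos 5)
Your proposal is correct and matches the intended derivation: the paper states this corollary without proof as an immediate consequence of Theorem \ref{st are continuous}, and your reductions via $a\wedge b=-\bigl((-a)\vee(-b)\bigr)$, $|a|=a\vee(-a)$, $a^{+}=a\vee 0$ and $a^{-}=(-a)^{+}$, together with the observation that negation preserves $\mu$-statistical order convergence with the same dominating net, are exactly the standard route the paper relies on.
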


We continue with several basic results that are motivated by their analogies from Riesz space theory.
\begin{theorem}\label{basic properties}
	Let $(x_\alpha)_{\alpha\in A}$ be a net in a Riesz space $E$. Then the following results hold:
	\begin{enumerate}
		\item[(i)] \ $x_\alpha\stmuc x$ iff  $(x_\alpha-x)\stmuc 0$ iff $\lvert x_\alpha-x\rvert\stmuc 0$;
		\item[(ii)] \ the $\mu$-statistically order limit is linear;
		\item[(iii)] \ the $\mu$-statistically order limit is uniquely determined;
		\item[(iv)] \ the positive cone $E_+$ is closed under the $\mu$-statistically order convergence;
		\item[(v)] \ $x_{\alpha_\delta}\stmuc x$ for any subnet $(x_{\alpha_\delta})_{\delta\in \Delta}$ of $x_\alpha\stmuc x$ with $\mu(\Delta)=1$.
	\end{enumerate}
\end{theorem}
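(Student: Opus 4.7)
The plan is to handle the five parts in order, with each built from the same template: unwind the definition of $\mu$-statistically order convergence, apply the triangle inequality in the Riesz space, and exploit the stability $\mu(\Delta\cap\Sigma)=1$ under intersections of measure-one sets that is highlighted just before Proposition \ref{inequality}.

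For (i), any witness $(p_\alpha,\Delta)$ showing $x_\alpha\stmuc x$ satisfies $|x_{\alpha_\delta}-x|\leq p_{\alpha_\delta}$ for $\delta\in\Delta$, and the identities $|(x_\alpha-x)-0|=\bigl||x_\alpha-x|-0\bigr|=|x_\alpha-x|$ allow the same witness to work for $(x_\alpha-x)\stmuc 0$ and for $|x_\alpha-x|\stmuc 0$; the converses are the same argument read backwards. For (ii), from $x_\alpha\stmuc x$ with $(p_\alpha,\Delta)$ and $y_\alpha\stmuc y$ with $(q_\alpha,\Sigma)$ I would form the witness $|a|p_\alpha+|b|q_\alpha$ on $\Gamma:=\Delta\cap\Sigma$: the inequality $|(ax_\alpha+by_\alpha)-(ax+by)|\leq|a|p_\alpha+|b|q_\alpha$ holds on $\Gamma$, and the right-hand side $\mu$-statistically decreases to $0$ because sums and positive scalar multiples of decreasing-to-zero nets on a common index are again decreasing-to-zero.

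For (iii), suppose $x_\alpha\stmuc x$ and $x_\alpha\stmuc y$ with witnesses $(p_\alpha,\Delta)$ and $(q_\alpha,\Sigma)$. On $\Gamma=\Delta\cap\Sigma$, the triangle inequality gives $|x-y|\leq|x_{\alpha_\gamma}-x|+|x_{\alpha_\gamma}-y|\leq p_{\alpha_\gamma}+q_{\alpha_\gamma}$, and since $(p_{\alpha_\gamma}+q_{\alpha_\gamma})_{\gamma\in\Gamma}\downarrow 0$ I conclude $|x-y|=0$. Part (iv) then reduces to uniqueness via the lattice continuity established earlier: Corollary \ref{basic corollary}(iv) yields $x_\alpha^-\stmuc x^-$, and $x_\alpha\geq 0$ forces $x_\alpha^-\equiv 0$, which trivially $\mu$-statistically order converges to $0$, so (iii) gives $x^-=0$ and hence $x\in E_+$. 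For (v), given a subnet $(x_{\alpha_\delta})_{\delta\in\Delta}$ with $\mu(\Delta)=1$ and a witness $(p_\alpha,\Sigma)$ for $x_\alpha\stmuc x$, I restrict $p$ to $\Delta$ and take $\Sigma\cap\Delta$ as the new measure-one subset; the restriction of $\mu$ to $\Delta$ is itself a directed set measure, so $(p_{\alpha_\delta})_{\delta\in\Delta}$ together with $\Sigma\cap\Delta$ witnesses $x_{\alpha_\delta}\stmuc x$.

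The main obstacle is minor but technical: verifying in (ii) and (iii) that the sum $p_{\alpha_\gamma}+q_{\alpha_\gamma}$ really decreases to $0$ on the intersection $\Gamma$, since a priori $(p_\alpha)$ and $(q_\alpha)$ are known to be decreasing only on their own measure-one subsets $\Delta$ and $\Sigma$. This is exactly the manipulation already executed in the proof of Theorem \ref{st are continuous}, and I would mirror it: restrict to the common subnet indexed by $\Gamma\subseteq\Delta\cap\Sigma$, on which both component nets are decreasing with infimum zero, and appeal to the standard fact that in an Archimedean Riesz space the sum of two such nets is itself decreasing to zero.
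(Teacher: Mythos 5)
Your proposal is correct and follows essentially the same route as the paper: parts (i)--(iii) are exactly the ``straightforward'' arguments the paper omits (mirroring the intersection-of-measure-one-sets device from Theorem \ref{st are continuous}), part (iv) uses Corollary \ref{basic corollary} plus uniqueness just as the paper does (with $x^-$ in place of the paper's $x^+=x_\alpha^+$ argument, a trivial variant), and part (v) is the same intersection argument $\Gamma=\Delta\cap\Sigma$. The only caveat you flag---that the witnesses decrease to zero on the intersection---is treated at the same level of rigor as in the paper itself, so no gap relative to its standards.
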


\begin{proof}
	The properties $(i),\ (ii)$ and $(iii)$ are straightforward.
	
	For $(iv)$, take a non-negative $\mu$-statistically order convergent net $x_\alpha\stmuc x$ in $E$. Then it follows from Corollary \ref{basic corollary} that $x_\alpha=x_\alpha^+\stmuc x^+$. Moreover, by applying $(iii)$, we have $x=x^+$. So, we obtain the desired result $x\in E_+$.
	
	For $(v)$, suppose that $x_\alpha\stmuc x$. Then there is a net $(p_\alpha)_{\alpha\in A}\in E_{st_\mu\downarrow}\{0\}$ with $\Delta\in\mathcal{M}$ and $\mu(\Delta)=1$ such that  $|x_{\alpha_\delta}-x|\leq p_{\alpha_\delta}$ for each $\delta\in \Delta$. Thus, it is clear that $x_{\alpha_\delta}\stmuc x$. However, it should be shown that it is provided for all subnets under our assumption. Thus, take an arbitrary element $\Sigma\in \mathcal{M}$ with $\Sigma\neq\Delta$ and  $\mu(\Sigma)=1$. We show $(x_{\alpha_\sigma})_{\sigma\in\Sigma}\stmuc x$. Consider $\Gamma:=\Delta\cap \Sigma\in \mathcal{M}$. So, we have $\mu(\Gamma)=1$. Therefore, following from $|x_{\alpha_\gamma}-x|\leq p_{\alpha_\gamma}$ for each $\gamma\in \Gamma$, we get the desired result.
\end{proof}

Theorem \ref{order implies st conv} shows that the order convergence implies the $\mu$-statistically order convergence. For the converse of this, we give the following result.
\begin{theorem}\label{monoton isord conv}
	Every monotone $\mu$-statistically order convergent net order converges to its ${st}_\mu$-limit in Riesz spaces.
\end{theorem}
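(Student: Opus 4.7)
The plan is to reduce to the decreasing case via the linearity of the $\mu$-statistical order limit (Theorem \ref{basic properties}(ii)): if $(x_\alpha)$ is increasing then $(-x_\alpha)\stmuc -x$ is decreasing, and order convergence is preserved by negation. So assume $(x_\alpha)_{\alpha\in A}$ is decreasing with $x_\alpha\stmuc x$. Since a decreasing net order converges to $y$ precisely when $x_\alpha\downarrow y$, the task reduces to proving $x=\inf_{\alpha\in A}x_\alpha$.

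Unpack the hypothesis: there exist $(p_\alpha)_{\alpha\in A}\in E_{st_\mu\downarrow}\{0\}$ and $\Delta\in\mathcal{M}$ with $\mu(\Delta)=1$ such that $(p_{\alpha_\delta})_{\delta\in\Delta}\downarrow 0$ and $|x_{\alpha_\delta}-x|\leq p_{\alpha_\delta}$ for every $\delta\in\Delta$. To show that $x$ is a lower bound for $(x_\alpha)$, fix $\beta\in A$ and invoke subnet cofinality to get $\delta_\beta\in\Delta$ with $\alpha_\delta\geq\beta$ for all $\delta\geq\delta_\beta$ in $\Delta$. Monotonicity of $(x_\alpha)$ gives $x_{\alpha_\delta}\leq x_\beta$, and combining with $x-p_{\alpha_\delta}\leq x_{\alpha_\delta}$ yields $x-x_\beta\leq p_{\alpha_\delta}$ for every such $\delta$. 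Because any tail of a net that already order-decreases to $0$ still has infimum $0$, this forces $x\leq x_\beta$, and $\beta$ was arbitrary.

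For the greatest-lower-bound property, suppose $y\leq x_\alpha$ for every $\alpha\in A$. Then in particular $y\leq x_{\alpha_\delta}\leq x+p_{\alpha_\delta}$ for every $\delta\in\Delta$, so $y-x\leq p_{\alpha_\delta}$, and passing to the infimum over $\Delta$ gives $y\leq x$. Thus $x_\alpha\downarrow x$, which is order convergence: in Definition \ref{order conv} take $y_\alpha:=x_\alpha-x\geq 0$, which decreases to $0$ and dominates $|x_\alpha-x|$.

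The main subtlety is in the lower-bound step: one must exploit the cofinality of $\Delta$ within $A$ to dominate an arbitrary $\beta\in A$ by some $\alpha_\delta$, and use that the infimum of a tail of a downward-directed net to $0$ is still $0$; both are built into the subnet framework adopted in the paper. The monotonicity hypothesis is indispensable here, since without it the bound $x_{\alpha_\delta}\leq x_\beta$ fails entirely, as Example \ref{converse exam} illustrates in the non-monotone case.
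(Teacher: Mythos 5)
Your proof is correct, but it takes a genuinely different route from the paper's. The paper never unpacks Definition \ref{main def}: it fixes an arbitrary index $\alpha_0$, notes $x_{\alpha_0}-x_\alpha\in E_+$ for $\alpha\geq\alpha_0$, and applies Theorem \ref{basic properties}(ii) (linearity) together with (iv) (closedness of $E_+$ under the $st_\mu$-convergence) to get $x_{\alpha_0}-x\in E_+$, so $x$ is a lower bound; the same cone argument applied to $x_\alpha-z\stmuc x-z$ for any other lower bound $z$ gives $x\geq z$, hence $x_\alpha\downarrow x$. You instead work directly with the dominating net $(p_{\alpha_\delta})_{\delta\in\Delta}$ and compute the relevant infima by hand, which is more elementary and self-contained (nothing beyond Definition \ref{main def} and standard facts about decreasing nets is used), and you also treat the increasing case explicitly via negation, which the paper leaves implicit by only arguing the decreasing case. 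The price is that your lower-bound step needs $\Delta$ to be eventually dominating (cofinal) in $A$: Definition \ref{main def} literally only demands $\mu(\Delta)=1$, and a measure-one set need not be cofinal for a general directed set measure, so you are reading $(x_{\alpha_\delta})_{\delta\in\Delta}$ as a subnet in the paper's sense. I would count this as a legitimate reading rather than a gap, because the paper's own proof makes an exactly parallel implicit assumption when it applies (iv) to a net that is positive only on the tail $\{\alpha:\alpha\geq\alpha_0\}$, and without some such hypothesis the statement itself would not survive. In short, the paper's route buys brevity by reusing Theorem \ref{basic properties}; yours buys transparency about exactly where monotonicity, the infimum of the dominating net, and the cofinality of the measure-one index set enter the argument.
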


\begin{proof}
	We show that $x_\alpha\downarrow$ and $x_\alpha\stmuc x$ implies $x_\alpha\downarrow x$ in any Riesz space $E$. To see this, choose an arbitrary index $\alpha_0$. Then $x_{\alpha_0}-x_\alpha\in E_+$ for all $\alpha\geq \alpha_0$. It follows from Theorem \ref{basic properties} that $x_{\alpha_0}-x_\alpha\stmuc x_{\alpha_0}-x$, and also, $x_{\alpha_0}-x\in E_+$. Hence, we have $x_{\alpha_0}\geq x$. Then $x$ is a lower bound of $(x_\alpha)_{(\alpha\in A)}$ because $\alpha_0$ is arbitrary. Suppose that $z$ is another lower bound of $(x_\alpha)_{\alpha\in A}$. So, we obtain $x_\alpha-z\stmuc x-z$. It means that $x-z\in E_+$, or equivalent to saying that $x\geq z$. Therefore, we get $x_\alpha \downarrow x$.
\end{proof}

\begin{theorem}\label{cracter}
	Let $x:=(x_\alpha)_{\alpha\in A}$ be a net in a Riesz space. If $x\mathcal{X}_\Delta\co 0$ holds for some $\Delta\in \mathcal{M}$ with $\mu(\Delta)=1$ then $x\stmuc 0$.
\end{theorem}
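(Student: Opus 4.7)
The plan is to unpack the order convergence hypothesis into a concrete dominating net, and then read off $\mu$-statistical order convergence from Definition \ref{main def} using that same dominating net together with the $\Delta$ supplied by the hypothesis.

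First, I would apply Definition \ref{order conv} to the assumption $x\mathcal{X}_\Delta\co 0$: this yields a net $(y_\alpha)_{\alpha\in A}\downarrow 0$ in $E$ such that $|x_\alpha\mathcal{X}_\Delta(\alpha)|\leq y_\alpha$ for every $\alpha\in A$. On the subset $\Delta$, where $\mathcal{X}_\Delta(\alpha)=1$, this simplifies to $|x_\alpha|\leq y_\alpha$ for every $\alpha\in\Delta$. Off $\Delta$ the inequality is trivial, so the useful content of the hypothesis is exactly a pointwise-on-$\Delta$ domination by an order-decreasing null net.

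Next, I would take $p_\alpha:=y_\alpha$ and verify the ingredients of Definition \ref{main def} with limit $x=0$. Since $(p_\alpha)\downarrow 0$, Proposition \ref{order conv is st down to zero} places $(p_\alpha)$ in $E_{st_\mu\downarrow}\{0\}$, giving the dominating net required by the definition. The measure condition $\mu(\Delta)=1$ is handed to me by hypothesis, and the inequality $|x_{\alpha_\delta}-0|\leq p_{\alpha_\delta}$ for every $\delta\in\Delta$ is exactly what the previous step recorded. Invoking Definition \ref{main def} then delivers $x_\alpha\stmuc 0$.

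The only delicate point is the clause $(p_{\alpha_\delta})_{\delta\in\Delta}\downarrow 0$ that Definition \ref{main def} attaches to the selected $\Delta$; but the restriction of an order-decreasing null net to a full-measure subset of $A$ is still decreasing and dominated below by $0$, and this is precisely the reading used implicitly in Proposition \ref{order conv is st down to zero}. Because this is the sole obstacle and it is resolved by one appeal to that proposition, the theorem reduces to a direct translation of the hypothesis into the $\mu$-statistical framework; no additional structural results about Riesz spaces are needed.
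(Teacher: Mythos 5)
Your reduction is fine up to the point you yourself flag, but that ``delicate point'' is a genuine gap, not a formality: from $(y_\alpha)_{\alpha\in A}\downarrow 0$ you cannot conclude $(y_{\alpha_\delta})_{\delta\in\Delta}\downarrow 0$ for the particular full-measure set $\Delta$ supplied by the hypothesis. Being decreasing and bounded below by $0$ on $\Delta$ is strictly weaker than having infimum $0$ over $\Delta$, and Definition \ref{main def} demands the latter for the \emph{same} $\Delta$ on which the domination $|x_{\alpha_\delta}|\leq y_{\alpha_\delta}$ holds. Nothing in the axioms of a directed set measure forces a set of measure one to be cofinal, so the infimum over $\Delta$ can be strictly positive. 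Concretely, let $A=\mathbb{N}\times\mathbb{N}$ with the coordinatewise order and let $\mu(B)$ be a Banach limit of $\frac{1}{k}\bigl|\{m\leq k:(m,1)\in B\}\bigr|$; this is a directed set measure, the first row $\Delta=\{(m,1):m\in\mathbb{N}\}$ has $\mu(\Delta)=1$, and yet $y_{(m,n)}=\frac{1}{n}$ satisfies $(y_\alpha)_{\alpha\in A}\downarrow 0$ while its restriction to $\Delta$ is constantly $1$. Your appeal to Proposition \ref{order conv is st down to zero} does not repair this: that proposition is witnessed by the choice $\Delta=A$ itself and asserts nothing about restrictions of a decreasing null net to arbitrary full-measure subsets.

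The paper's proof diverges from yours exactly at this point. It does not try to verify Definition \ref{main def} for $x$ with the hypothesis set $\Delta$; instead it notes that the domination $|x\mathcal{X}_\Delta|\leq p_\alpha$ holds on \emph{all} of $A$, so $x\mathcal{X}_\Delta\stmuc 0$ follows by pairing $(p_\alpha)_{\alpha\in A}\stmu 0$ with a set on which $p$ genuinely decreases to zero, and it then passes from the restriction to $\Delta$ (where $x$ and $x\mathcal{X}_\Delta$ coincide) to the whole net by invoking Proposition \ref{subnet}. In other words, the step you try to settle by a restriction argument is, in the paper, delegated to the subnet-to-net transfer. To salvage your write-up you would need either to justify that $\inf_{\delta\in\Delta}y_\delta=0$ (e.g., by proving $\Delta$ may be taken cofinal, which the measure axioms do not give), or to conclude as the paper does via $x\mathcal{X}_\Delta\stmuc 0$ and Proposition \ref{subnet}; as written, the key step fails.
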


\begin{proof}
	Suppose that there exists $\Delta\in\mathcal{M}$ with $\mu(\Delta)=1$ and $x\mathcal{X}_\Delta\co 0$ satisfies in a Riesz space $E$ for the characteristic function $\mathcal{X}_\Delta$ of $\Delta$. Thus, there is another net $(p_\alpha)_{\alpha\in A}\downarrow 0$ such that $|x\mathcal{X}_\Delta|\le p_\alpha$ for all $\alpha\in A$. So, it follows from Proposition \ref{order conv is st down to zero} that $(p_\alpha)_{\alpha\in A}\stmu 0$. Then there exists a subset $\Sigma\in\mathcal{M}$ such that $\mu(\Sigma)=1$ and $(p_{\alpha_\sigma})_{\sigma\in \Sigma}\downarrow 0$. Take $\Gamma:=\Delta\cap \Sigma$. Hence, we have $\mu(\Gamma)=1$. Following from $|x\mathcal{X}_\Gamma|\leq p_{\alpha_\gamma}$ for each $\gamma\in \Gamma$, we obtain $x\mathcal{X}_\Delta\stmuc 0$. Therefore, by applying Proposition \ref{subnet}, we obtain $(x_\alpha)_{\alpha\in A}\stmuc 0$.
\end{proof}

\begin{proposition}\label{riesz space}
	The family of all $st_\mu$-convergent nets $E_{st_\mu}$ is a Riesz space.
\end{proposition}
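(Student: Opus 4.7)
The plan is to exhibit $E_{st_\mu}$ as a Riesz subspace of the ambient Riesz space $E^A$ of all nets indexed by the fixed directed set $A$, equipped with pointwise algebraic operations and pointwise order; since $E^A$ is itself a Riesz space, it suffices to verify that $E_{st_\mu}$ is closed under addition, scalar multiplication, and the lattice operation $\vee$, from which closure under $\wedge$, $|\cdot|$, $(\cdot)^+$, $(\cdot)^-$ follows formally.

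First I would record the linearity step. Given two $st_\mu$-convergent nets $x_\alpha \stmuc x$ and $y_\alpha \stmuc y$ (on the same index set $A$), Theorem \ref{basic properties}(ii) states that the $\mu$-statistically order limit is linear, so $(a x_\alpha + b y_\alpha) \stmuc (a x + b y)$ for every $a,b \in \mathbb{R}$. Hence $E_{st_\mu}$ is a linear subspace of $E^A$.

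Next I would invoke the lattice-continuity already proved in the section. Theorem \ref{st are continuous} gives $x_\alpha \vee y_\alpha \stmuc x \vee y$, and Corollary \ref{basic corollary}(i) gives the corresponding statement for $\wedge$. Therefore $E_{st_\mu}$ is closed under the pointwise lattice operations inherited from $E^A$, and so it is a Riesz subspace of $E^A$, which is exactly what the proposition claims.

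There is no real obstacle here: once the linearity of the $st_\mu$-limit and the $st_\mu$-continuity of $\vee$ are available, the Riesz-space axioms transfer automatically from $E$. The only point worth an explicit line is closure under negation, $(-x_\alpha)\stmuc(-x)$, which is needed both to close under scalar multiplication by $-1$ and to pass from $\vee$ to $\wedge$ via $x\wedge y = -\bigl((-x)\vee(-y)\bigr)$; this is immediate from the definition, since $|(-x_\alpha)-(-x)| = |x_\alpha - x|$, so any dominating net $(p_\alpha)\in E_{st_\mu\downarrow}\{0\}$ witnessing $x_\alpha \stmuc x$ simultaneously witnesses $(-x_\alpha)\stmuc(-x)$.
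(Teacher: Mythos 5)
Your proof is correct and follows essentially the same route as the paper: linearity of the $st_\mu$-limit (Theorem \ref{basic properties}(ii)) gives the vector-subspace part, and the lattice-continuity results of the section give closure under the lattice operations, so $E_{st_\mu}$ is a Riesz subspace of the Riesz space of all nets indexed by $A$. The only cosmetic difference is that the paper verifies closure under the modulus $|x_\alpha|\stmuc|x|$ (Corollary \ref{basic corollary}(ii)) and invokes the standard criterion that a vector subspace closed under $|\cdot|$ is a Riesz subspace, whereas you check closure under $\vee$ and $\wedge$ directly; these are interchangeable.
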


\begin{proof}
	From the properties in Theorem \ref{basic properties}, $E_{st_\mu}$ is a vector space. Take an element $x:=(x_\alpha)_{\alpha_\in A}$ in $E_{st_\mu}$. Then we have $x\stmuc z$ for some $z\in E$. Thus, it follows from Corollary \ref{basic corollary} that $|x|\stmuc |z|$. It means that $|x|\in E_{st_\mu}$, i.e., $E_{st_\mu}$ is a Riesz subspace (cf. \cite[Thm.1.3 and Thm.1.7]{AB}).
\end{proof}

\begin{theorem}\label{ideal}
	The set of all order bounded nets in a Riesz space $E$ is an order ideal in $E_{st_\mu}\{0\}$.
\end{theorem}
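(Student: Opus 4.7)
The plan is to interpret the claim as asserting that the set $B := \{(x_\alpha)_{\alpha\in A} \in E_{st_\mu}\{0\} : (x_\alpha) \text{ is order bounded in } E\}$ forms an order ideal inside the Riesz space $E_{st_\mu}\{0\}$ (where $E_{st_\mu}\{0\}$ inherits the pointwise order from $E$, and is itself solid in $E_{st_\mu}$ by Proposition~\ref{inequality} together with Corollary~\ref{basic corollary}). To prove this I would verify the two defining requirements of an order ideal in turn: $B$ is a vector subspace of $E_{st_\mu}\{0\}$, and $B$ is solid in $E_{st_\mu}\{0\}$ with respect to the pointwise order.

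For the vector subspace part, take $(x_\alpha),(y_\alpha) \in B$ and a scalar $\lambda \in \mathbb{R}$. By definition there exist $u,v \in E_+$ with $|x_\alpha| \leq u$ and $|y_\alpha| \leq v$ for every $\alpha \in A$. Then $|x_\alpha + y_\alpha| \leq u + v$ and $|\lambda x_\alpha| \leq |\lambda|\, u$, so the sum and scalar multiple are again order bounded nets in $E$. Membership in $E_{st_\mu}\{0\}$ is preserved by the linearity of the $\mu$-statistical order limit, which is property~(ii) of Theorem~\ref{basic properties}. Hence $B$ is closed under the vector operations inherited from $E_{st_\mu}\{0\}$.

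For solidity, suppose $(x_\alpha) \in B$ and $(y_\alpha) \in E_{st_\mu}\{0\}$ satisfy $|y_\alpha| \leq |x_\alpha|$ for every $\alpha \in A$. Pick $u \in E_+$ with $|x_\alpha| \leq u$ for all $\alpha$; then $|y_\alpha| \leq |x_\alpha| \leq u$ for all $\alpha$, so $(y_\alpha)$ is order bounded in $E$. Since $(y_\alpha)$ already belongs to $E_{st_\mu}\{0\}$ by hypothesis, we conclude $(y_\alpha) \in B$, which is exactly the solidity condition.

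There is no genuine obstacle here: both parts reduce to the observation that a pointwise dominated net inherits the uniform upper bound of its dominator, combined with the linearity of the $\mu$-statistical order limit established earlier. The only subtlety worth flagging is the convention that the order on net spaces is pointwise (so that ``order bounded'' in $E_{st_\mu}\{0\}$ means uniformly order bounded in $E$); once this is made explicit, the verification of the ideal axioms is immediate and requires no further use of the measure $\mu$ beyond what is already encoded in the definition of $E_{st_\mu}\{0\}$.
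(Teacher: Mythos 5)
Your reading of the statement is not the one the paper's proof supports, and the difference matters: under your interpretation the theorem loses all of its content about statistical convergence. You take the ideal to be $B=\{\text{order bounded nets}\}\cap E_{st_\mu}\{0\}$ inside the ambient space $E_{st_\mu}\{0\}$, so in your solidity step the dominated net $(y_\alpha)$ is \emph{assumed} to lie in $E_{st_\mu}\{0\}$ and the only thing left to check is that it inherits an order bound from its dominator --- a triviality, as you yourself signal when you remark that no further use of the measure $\mu$ is needed. The paper proves something different and genuinely nontrivial: it takes $y:=(y_\alpha)$ merely order bounded (not assumed $\mu$-statistically null) with $|y|\leq|x|$ for some $x\in E_{st_\mu}\{0\}$, and \emph{derives} $y\stmuc 0$. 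The mechanism is exactly the piece missing from your argument: from $x\stmuc 0$ one gets $|x|\stmuc 0$ by Corollary~\ref{basic corollary}, then $0\leq|y|\leq|x|$ together with Proposition~\ref{inequality} (the squeeze property, in the form noted right after it) gives $|y|\stmuc 0$, and Theorem~\ref{basic properties}(i) converts this to $y\stmuc 0$. In other words, the intended content is a solidity statement for $E_{st_\mu}\{0\}$ relative to (order bounded) dominated nets, not a solidity statement for the order bounded nets inside $E_{st_\mu}\{0\}$.

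To be fair, the theorem as printed is garbled --- the set of all order bounded nets is not contained in $E_{st_\mu}\{0\}$ (a nonzero constant net is order bounded but not statistically order null), so some reconstruction is forced --- but your reconstruction should have raised a red flag precisely because it makes the statement provable without ever touching $\mu$-statistical convergence. If you rewrite your solidity step so that the hypothesis on $(y_\alpha)$ is order boundedness plus domination $|y_\alpha|\leq|x_\alpha|$ with $(x_\alpha)\in E_{st_\mu}\{0\}$, and the conclusion $(y_\alpha)\stmuc 0$ is obtained via the squeeze argument above, you recover the paper's proof; as it stands, your proposal proves a different, essentially vacuous statement and omits the theorem's key step.
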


\begin{proof}
	By the same argument in Proposition \ref{riesz space}, $E_{st_\mu}\{0\}$ is a Riesz space. Assume that $|y|\leq |x|$ hold for arbitrary $x:=(x_\alpha)_{\alpha_\in A}\in E_{st_\mu}\{0\}$ and for an order bounded net $y:=(y_\alpha)_{\alpha_\in A}$. Since $x\stmuc 0$, we have $|x|\stmuc 0$. Then it follows from Proposition \ref{inequality} that $|y|\stmuc 0$, and so, it follows from Theorem \ref{basic properties}$(i)$ that $y\stmuc 0$. Therefore, we get the desired result, $y\in E_{st_\mu}\{0\}$.
\end{proof}

%%%%%%%%%%%%%%%%%%%%%%%%%%%%%%%%%%%%%%%%%%%%%%%%%%%%%%%%%%

\end{document}